\newtheorem{thm}{Theorem}[section]
\theoremstyle{definition}
\theoremstyle{definition}
\theoremstyle{definition}
\newtheorem{rem}{Remark}[section]
\numberwithin{equation}{section}
\newcommand{\punt}{\boldsymbol{.}}
\begin{document}
\author{P. Petrullo and D. Senato}
\date{}
\title{An instance of umbral methods in representation theory: the parking function module}
\maketitle
\thispagestyle{empty}
\begin{center}
\textsf{Dipartimento di Matematica e Informatica, Universit\'a
degli Studi della Basilicata, via dell'Ateneo Lucano 10, 85100
Potenza, Italia}.\\
\verb"p.petrullo@gmail.com, domenico.senato@unibas.it"
\end{center}
\textsf{\textbf{keywords}: parking functions, noncrossing
partitions, volume polynomial, umbral calculus, Abel polynomials.}\\\\
\textsf{\textsf{AMS subject classification}: 05A18, 05A40, 05E05,
05E10}\\
\begin{abstract}
We test the umbral methods introduced by Rota and Taylor within
the theory of representation of the symmetric group. We prove that
the volume polynomial of Pitman and
Stanley represents the Frobenius characteristic of the Haiman parking function module, when the set of its variables consists of suitable umbrae. We also show that the volume polynomial in any set of similar and uncorrelated umbrae is umbrally equivalent, up to a constant
term, to an
Abel-like umbral polynomial. An
analogous treatment of the parking function module of type B is given.

\end{abstract}

\section{Introduction}
Parking functions were introduced by Konheim and Weiss
\cite{KonWei} in the Sixties. Afterwards several authors gave a
strong contribution to the development of this subject in the
context of combinatorics and representation theory
\cite{FoaRio},\cite{KunYan}, \cite{NovThy1,NovThy2}, \cite{Stan3}.
Recently Pitman and Stanley \cite{PitStan} have introduced the so-called volume polynomial, that arises naturally in several different settings, in particular in the study of plane partitions and parking functions. Haiman \cite{Haim} has defined the parking
function module considering the standard action of the symmetric
group $\mathfrak{S}_n$ on the set of all parking functions of
length $n$. In this context, parking functions give a simple
combinatorial description of the ring $R_n$ of diagonal
coinvariants of $\mathbb{C}[\mathbf{x},\mathbf{y}]$, seen as a
representation of the symmetric group. See \cite{Haim2} for a
survey.\\
The systematic study of noncrossing partitions was started in the
Seventies by Kreweras \cite{Krew} and Poupard \cite{Pou}. This
subject arises in the wide-ranging of connections between algebra
and combinatorics \cite{Edel1,Edel2}, \cite{EdelSim}, \cite{Sim1},
\cite{Spei}, an overview can be found in \cite{Sim2}.
In detail, the noncrossing partition lattice turns out to have
strong symmetry properties which give rise to a symmetric function
corresponding to a representation of the symmetric group. Indeed,
Stanley \cite{Stan1} has recovered the Haiman action as a local
action of the symmetric group on the maximal chains of the lattice
of noncrossing partitions. This is done by defining an edge
labelling of maximal chains with parking functions, each occuring
once.\\
As shown by Biane \cite{Bia1} the lattice of noncrossing
partitions can be embedded into the Cayley graph of the symmetric
group. Moreover, Reiner
\cite{Rei} has introduced a class of noncrossing partitions for
all classical reflection groups. In particular, type A reflection groups (i.e. symmetric groups) correspond to classical noncrossing partitions. The hyperoctahedral groups, that is type B reflection groups, correspond to the \textit{noncrossing partitions of type B}, that is
the noncrossing partitions of the set $[\pm
n]=\{-n,\ldots,-1,1,\ldots,n\}$ which are invariant under sign
change. The results of Biane and Reiner have been generalized by
Brady \cite{Br}, Brady and Watt \cite{BW} and Bessis \cite{Bes}. A
poset $\mathcal{NC}(W)$ of noncrossing partitions can be defined
for finite Coxeter systems $(W,S)$ of each type. In particular,
these posets have a nice description in terms of the length
function $\ell_{\scriptscriptstyle T}$ defined with respect to their
sets $T$ of reflections. The reader may refer \cite{Arm} for an overview on the combinatorial
aspect of this beautiful subject. The notion of parking function of type
B, introduced by Stanley \cite{Stan1} and deeply studied by Biane
\cite{Bia}, parallels the classical one providing an edge
labelling of maximal chains in the lattice of noncrossing
partitions of type B.\\
The aim of this paper is to test the umbral methods introduced in
Rota and Taylor \cite{RotTay} in this context. Applications of
these methods are given by Zeilberger \cite{Zei}, where generating
functions are computed for many difficult problems dealing with
counting many combinatorial objects. Applications to bilinear
generating functions for polynomial sequences are given by Gessel
\cite{Ges}. The ideas of Rota and Taylor have been developed by Di
Nardo and Senato in \cite{DiNaSen1,DiNaSen2} and here we follow
this last point of view. In this paper, we prove that the
$n$-volume polynomial $V_n(x_1,\ldots,x_n)$ of Pitman and Stanley
represents the Frobenius characteristic $\mathcal{PF}_n$ of the
Haiman parking function module, when each variable $x_i$ is
replaced by a suitable umbra $\bar{\vartheta}_i$. We also use Abel
polynomials $A_n(x,\alpha)=x(x-n\punt\alpha)^{n-1}$ of Rota, Shen and Taylor
\cite{RST} to show that
$n!V_n(\bar{\vartheta}_1,\ldots,\bar{\vartheta}_n)$ and
$\bar{\vartheta}(\bar{\vartheta}+n\punt\bar{\vartheta})^{n-1}$ are
umbrally equivalent. Analogous treatment is reserved to the parking function module of type B. In this case, the polynomials
$B_n(x,\alpha)=(x-n\punt\alpha)^{n}$  play a role of Abel polynomials of type B.
Further applications of Abel polynomial can be found in \cite{DNPS},
where an unifying framework for the cumulant theory, both
classical, boolean and free, is given.
\section{Parking functions, noncrossing partitions and volume polynomial}

A \textit{parking function} of length $n$ is a sequence
$\mathbf{p}=(p_1,\ldots,p_n)$ of $n$ positive integers whose
nondecreasing arrangement
$\mathbf{p}^{\prime}=(p^{\prime}_{1},\ldots,p^{\prime}_{n})$ is
such that $p^{\prime}_{j}\leq j$. As in {\cite{PitStan}} we denote
by $park(n)$ the set of all parking functions of length $n$. Its
cardinality is $(n+1)^{n-1}$. The symmetric group $\mathfrak{S}_n$
acts on the set $park(n)$ by permuting the entries of parking
functions (\textit{standard action}). As introduced by Haiman
{\cite{Haim}}, the \textit{parking function module} is obtained by
considering the standard action of $\frak{S}_n$ on the
$\mathbb{Q}$-vector space spanned by all parking functions of length
$n$. The number of orbits of this action is equal to the $n$-th
Catalan number $C_n$, that is
\begin{displaymath}
C_n=\frac{1}{n+1}{2n \choose n}.
\end{displaymath}
The Frobenius characteristic of the parking function module (i.e.
the symmetric function associated to its character by the
Frobenius map $ch$) is known as \textit{parking function symmetric
function} and is denoted by $\mathcal{PF}_n$. We have
\begin{equation}\label{id:PFn}
\mathcal{PF}_n=\sum_{\mu\vdash
n}\frac{(n)_{\ell(\mu)-1}}{m({\mu})!}h_{\mu},
\end{equation}
where the sum ranges over all integer partitions $\mu$ of $n$.
More precisely, in (\ref{id:PFn}) we have $m(\mu)!=m_1!\cdots
m_n!$, $m_i$ being the number of parts of $\mu$ equal to $i$,
$\ell(\mu)=m_1+\cdots+m_n$, and $h_{\mu}$ denotes the complete
homogeneous symmetric function indexed by $\mu$. There are at
least three other ways to get the symmetric function
$\mathcal{PF}_n$. Two of these arise within noncrossing
partitions and are due to Stanley \cite{Stan1}.\\
Let $[n]$ denote the set $\{1,\ldots,n\}$ of positive integers. A
\textit{noncrossing partition} of $[n]$ is a partition
$\pi=\{B_1,\ldots,B_s\}$ of $[n]$, such that, if $1\leq
h<l<k<m\leq n$ with $h,k\in B_j$ and $l,m\in B_{j'}$, then $j=j'$.
As usual, denote by $\mathcal{NC}_n$ the set of all noncrossing
partitions of $[n]$. Its cardinality is $|\mathcal{NC}_n|=C_n$
too. A simple bijection between noncrossing partitions of $[n]$
and orbits of the parking function module was given by Rattan \cite{Rat}.\\
As well known, $\mathcal{NC}_n$ is a lattice of rank
$n-1$ with respect to the refinement order. Denote by
$\mathbf{0}_n$ and $\mathbf{1}_n$ its minimum and maximum element
respectively. The number of maximal chains of $\mathcal{NC}_{n}$
is $n^{n-2}$. As shown by Stanley \cite{Stan1}, maximal chains of
$\mathcal{NC}_{n+1}$, whose number is $(n+1)^{n-1}$, can be
labelled by parking functions, each occurring once. Moreover, if
$V_{\scriptscriptstyle\mathcal{NC}_{n+1}}$ is the
$\mathbb{Q}$-vector space spanned by all the maximal chains of $\mathcal{NC}_{n+1}$, a local action of the symmetric group $\frak{S}_n$ can be defined
on $V_{\scriptscriptstyle\mathcal{NC}_{n+1}}$ which turns out to
have the same character of the parking function module. In
particular, this is obtained by transferring the action of
$\frak{S}_n$ on parking functions to their respective maximal
chains. Stanley has also proved the following generalization. Let $k$ be a positive integer. A $k$-parking
function is a sequence $\mathbf{p}=(p_1,\ldots,p_n)$ of $n$
positive integers whose nondecreasing arrangement
$\mathbf{p}^{\prime}=(p^{\prime}_{1},\ldots,p^{\prime}_{n})$ is
such that $p^{\prime}_{j}\leq kj$. Let $\mathcal{NC}^{(k)}_n$
denote the subset of all noncrossing partitions of
$\mathcal{NC}_{kn}$ whose block's cardinalities are multiples of
$k$. Maximal chains of $\mathcal{NC}_{n}^{(k)}$ are labelled by
$k$-parking functions each occurring once. This yields a local
action of $\frak{S}_n$ on the $\mathbb{Q}$-vector space spanned by
the maximal chains of $\mathcal{NC}_{n}^{(k)}$ which is isomorphic
to the $k$-parking function module. The following results are proved
in \cite{Stan1}.
\begin{thm}
If $H(t)=1+\sum_{n\geq 1}h_n t^n$ and
$\mathcal{PF}(t)=1+\sum_{n\geq 1}\mathcal{PF}_nt^n$, then we have
\begin{equation}\label{St1}
t\mathcal{PF}(t)=\left[\frac{t}{H(t)}\right]^{\scriptscriptstyle<-1>},
\end{equation}
where $^{\scriptscriptstyle<-1>}$ denotes the compositional
inverse. More generally, if $k$ is a positive
integer, $\mathcal{PF}_n^{(k)}$ is the Frobenius characteristic of
the $k$-parking function module and
$\mathcal{PF}^{(k)}(t)=1+\sum_{n\geq 1}\mathcal{PF}_n^{(k)}t^n$,
then
\begin{equation}\label{St2}
t\mathcal{PF}^{(k)}(t)=\left[\frac{t}{H(t)^k}\right]^{\scriptscriptstyle<-1>}.
\end{equation}
\end{thm}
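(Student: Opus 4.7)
The core tool is Lagrange inversion in the ring of symmetric functions (with $h_n$ playing the role of coefficients). I would first reduce the generating function identity (\ref{St1}) to a coefficient extraction: setting $F(t)=t\mathcal{PF}(t)$ and $G(t)=t/H(t)$, the claim $F=G^{\scriptscriptstyle<-1>}$ is equivalent, by the classical Lagrange formula, to
\[
[t^n]F(t) \;=\; \frac{1}{n}[t^{n-1}]\bigl(t/G(t)\bigr)^n \;=\; \frac{1}{n}[t^{n-1}]H(t)^{n},
\]
that is, $\mathcal{PF}_n=\tfrac{1}{n+1}[t^n]H(t)^{n+1}$.

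The second step is a direct multinomial expansion of $H(t)^{n+1}=\bigl(1+h_1 t+h_2 t^2+\cdots\bigr)^{n+1}$. Collecting monomials $h_1^{m_1}h_2^{m_2}\cdots$ by the partition shape $\mu\vdash n$ with $m_i=m_i(\mu)$, the number of slots that must be occupied by $1$ is $n+1-\ell(\mu)$, so the relevant multinomial coefficient is
\[
\binom{n+1}{n+1-\ell(\mu),\, m_1,\,\ldots,\,m_n}=\frac{(n+1)!}{(n+1-\ell(\mu))!\,m(\mu)!}.
\]
Division by $n+1$ turns $(n+1)!/(n+1-\ell(\mu))!$ into $(n)_{\ell(\mu)-1}$, producing exactly the coefficient appearing in (\ref{id:PFn}). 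This settles (\ref{St1}).

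For the generalization (\ref{St2}), the same Lagrange-inversion argument, now with $G(t)=t/H(t)^k$, reduces the claim to
\[
\mathcal{PF}^{(k)}_n \;=\; \frac{1}{n+1}[t^n]H(t)^{k(n+1)}.
\]
The main obstacle is that an explicit $h_\mu$-expansion analogous to (\ref{id:PFn}) for $\mathcal{PF}^{(k)}_n$ is not given in the excerpt. To obtain one, I would compute the permutation character of the $k$-parking function module: a permutation $\sigma$ of cycle type $\mu$ fixes a $k$-parking function precisely when the function is constant on every cycle of $\sigma$, and the number of such invariants can be obtained by a $k$-refinement of Pollak's circular argument (equivalently, a cycle-lemma count), yielding the fixed-point totals needed to write $\mathcal{PF}^{(k)}_n$ in the power-sum basis. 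A change of basis to $h_\mu$ then matches the multinomial expansion of $[t^n]H(t)^{k(n+1)}$, closing the argument.

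An alternative, more conceptual route, which is the one sketched in the cited work of Stanley, avoids the explicit fixed-point count: one uses the edge-labelling of maximal chains of $\mathcal{NC}^{(k)}_n$ by $k$-parking functions together with the local $\mathfrak{S}_n$-action to identify $\mathcal{PF}^{(k)}_n$ with a generating function over chains, and the structural recursion satisfied by noncrossing partitions produces the compositional-inverse relation directly. I expect the fixed-point route to be computationally heavier but combinatorially more transparent; the chain-based route is cleaner but requires care to transport the action from parking functions to chains.
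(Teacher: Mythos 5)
First, a point of comparison: the paper does not prove this theorem at all. It is quoted from Stanley's \emph{Parking functions and noncrossing partitions} (the sentence immediately preceding it reads ``The following results are proved in \cite{Stan1}''), so there is no internal argument to measure yours against. Judged on its own terms, your treatment of (\ref{St1}) is correct and complete relative to what the paper supplies: Lagrange inversion reduces the claim to $\mathcal{PF}_n=\frac{1}{n+1}[t^n]H(t)^{n+1}$, and your multinomial expansion of $H(t)^{n+1}$, grouped by partition shape, reproduces exactly the coefficients $(n)_{\ell(\mu)-1}/m(\mu)!$ of the expansion (\ref{id:PFn}) that the paper states beforehand. The arithmetic checks: $\frac{1}{n+1}\cdot\frac{(n+1)!}{(n+1-\ell(\mu))!}=n(n-1)\cdots(n-\ell(\mu)+2)=(n)_{\ell(\mu)-1}$, and $\ell(\mu)\leq n<n+1$ guarantees the multinomial coefficient is well defined.

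The genuine gap is in (\ref{St2}). Your Lagrange-inversion reduction to $\mathcal{PF}^{(k)}_n=\frac{1}{n+1}[t^n]H(t)^{k(n+1)}$ is correct, but neither of the two routes you then offer is carried out, and the first is precisely where all the remaining work sits. Since the $k$-parking function module is a permutation module for the standard action, its Frobenius characteristic equals $\sum_{\mathbf{p}'}h_{\mu(\mathbf{p}')}$, summed over nondecreasing $k$-parking functions $\mathbf{p}'$, where $\mu(\mathbf{p}')$ records the multiplicities of the entries; so what you actually need is the count of nondecreasing $k$-parking functions with prescribed multiplicity partition $\mu$, and you must show that this count equals $\frac{1}{n+1}\binom{k(n+1)}{k(n+1)-\ell(\mu),\,m_1,\ldots,m_n}$ --- the $k$-refined cycle-lemma enumeration you allude to. As written, ``can be obtained by a $k$-refinement of Pollak's circular argument'' is an assertion, not a proof; until that enumeration is performed (or an expansion of $\mathcal{PF}^{(k)}_n$ analogous to (\ref{id:PFn}) is otherwise established), the second half of the theorem is unproved. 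The alternative chain-based sketch is vaguer still and would in any case presuppose the edge-labelling of maximal chains of $\mathcal{NC}^{(k)}_n$ by $k$-parking functions and the transfer of the $\mathfrak{S}_n$-action to those chains, which is itself one of Stanley's theorems rather than something available for free.
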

A second way to obtain the parking function symmetric function
is the following. For each $S\subseteq[n-1]$ the \textit{Gessel's
quasi-symmetric function} $\mathcal{Q}_S$ is defined by
$$\mathcal{Q}_S=\sum_{i_1\leq\ldots\leq i_n\atop i_j<i_{j+1}\text{ if }j\in
S}x_{i_1}\cdots x_{i_n}.$$
Let $r$ denote the rank function of the lattice
$\mathcal{NC}_{n+1}$. If $S\subseteq[n-1]$ and $|S|=s-1$, then let
$\alpha_{\scriptscriptstyle\mathcal{NC}_{n+1}}(S)$ be the number
of chains
$\mathbf{0}_{n+1}=\pi_0<\pi_1<\ldots<\pi_s=\mathbf{1}_{n+1}$ such
that $S=\{r(\pi_1),\ldots,r(\pi_{s-1})\}$. Define
$\beta_{\scriptscriptstyle\mathcal{NC}_{n+1}}(S)$ to be the
following integer:
$$\beta_{\scriptscriptstyle\mathcal{NC}_{n+1}}(S)=\sum_{T\subseteq S}(-1)^{|S-T|}\alpha_{\scriptscriptstyle\mathcal{NC}_{n+1}}(S).$$
The functions $\alpha_{\scriptscriptstyle\mathcal{NC}_{n+1}}$ and
$\beta_{\scriptscriptstyle\mathcal{NC}_{n+1}}$ are named
\textit{flag f-vector} and \textit{flag h-vector} respectively, of
$\mathcal{NC}_{n+1}$. The connection between
$\beta_{\scriptscriptstyle\mathcal{NC}_{n+1}}$ and $\mathcal{PF}_n$ is
shown by the following theorem.
\begin{thm}[\cite{Stan1}]
Let $\beta_{\scriptscriptstyle\mathcal{NC}_{n+1}}$ and
$\mathcal{Q}_{S}$ be defined as above. Then the polynomial
$$F_{\scriptscriptstyle\mathcal{NC}_{n+1}}=\sum_{S\subseteq[n-1]}\beta_{\scriptscriptstyle\mathcal{NC}_{n+1}}(S)\mathcal{Q}_S,$$
is symmetric and it is such that
\begin{equation}\label{id:flag}
\omega F_{\scriptscriptstyle\mathcal{NC}_{n+1}}=\mathcal{PF}_n,
\end{equation}
$\omega$ being the involution of the ring of the symmetric
functions mapping complete homogeneous symmetric functions $h_n$
onto elementary symmetric functions $e_n$.
\end{thm}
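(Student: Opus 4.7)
The plan is to prove both assertions simultaneously, by means of Stanley's edge labeling of the maximal chains of $\mathcal{NC}_{n+1}$ by parking functions. One labels each cover relation $\pi \lessdot \pi'$ by an element of $[n]$ recording the block-merge that takes $\pi$ to $\pi'$, so that reading the labels along a maximal chain $c=(\mathbf{0}_{n+1}=\pi_0<\pi_1<\cdots<\pi_n=\mathbf{1}_{n+1})$ yields a word $\mathbf{p}(c) \in [n]^n$ which turns out to be a parking function, and the assignment $c \mapsto \mathbf{p}(c)$ is a bijection onto $park(n)$.

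The key step is to verify that this labeling is an EL-labeling of $\mathcal{NC}_{n+1}$, i.e.\ every closed interval admits a unique strictly-increasing maximal chain which is also the lexicographic minimum. Once this is checked, the standard theorem of Bj\"orner and Stanley on flag $h$-vectors of EL-shellable posets gives
$$\beta_{\mathcal{NC}_{n+1}}(S) = \bigl|\{c : D(\mathbf{p}(c)) = S\}\bigr|, \qquad D(\mathbf{p}) := \{j \in [n-1] : p_j > p_{j+1}\},$$
and grouping maximal chains by their descent sets yields the compact identity
$$F_{\mathcal{NC}_{n+1}} = \sum_{\mathbf{p} \in park(n)} \mathcal{Q}_{D(\mathbf{p})}.$$

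To conclude, I would transport the standard $\mathfrak{S}_n$-action on $park(n)$ across this bijection, obtaining a local action on maximal chains whose Frobenius characteristic is $\mathcal{PF}_n$ by construction. The general principle, essentially due to Gessel, that for an $\mathfrak{S}_n$-set $X$ equipped with a compatible descent statistic $D$ the sum $\sum_{x\in X} \mathcal{Q}_{D(x)}$ is symmetric and its image under $\omega$ equals the Frobenius characteristic of $\mathbb{Q}[X]$, then delivers both the symmetry of $F_{\mathcal{NC}_{n+1}}$ and the identity $\omega F_{\mathcal{NC}_{n+1}} = \mathcal{PF}_n$.

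The hardest step should be establishing the EL-property combinatorially, because the cover structure of $\mathcal{NC}_{n+1}$ is intricate enough that exhibiting the strictly-increasing label sequence inside a general interval $[\pi,\pi']$ demands a careful case analysis of block-merges. If that proves too delicate, a fallback is to bypass EL-shellability entirely and compute the flag $f$-vector $\alpha_{\mathcal{NC}_{n+1}}(S)$ directly via a recursive multiplicative decomposition of $\mathcal{NC}_{n+1}$ along a first cover, and then match the resulting symmetric function in the $h_\mu$-basis with formula \eqref{id:PFn} by means of Stanley's generating function \eqref{St1}.
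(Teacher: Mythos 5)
First, a caveat on the comparison: the paper contains no proof of this statement at all --- it is imported wholesale from Stanley's article, where the argument runs through local $\mathfrak{S}_n$-actions on maximal chains and the theory of flag-symmetric posets rather than through lexicographic shellability. So your outline is necessarily a different route, and in principle a workable one. Two remarks on its shape: for the flag $h$-vector interpretation you do not need the full EL condition, only the $R$-labelling property (a unique increasing maximal chain in every interval); the lexicographic-minimum clause buys shellability, which is irrelevant here, so the step you flag as hardest is lighter than you fear. The bijection between maximal chains and $park(n)$, which you take as given, is the genuinely hard combinatorial content of Stanley's paper.

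The genuine gap is your descent convention, and it is not cosmetic, because the $\omega$ in the statement lives exactly there. Under Stanley's labelling every interval has a unique \emph{strictly} increasing chain, and the corresponding (dual) form of the Bj\"orner--Stanley theorem then says that $\beta_{\scriptscriptstyle\mathcal{NC}_{n+1}}(S)$ counts chains whose label word has \emph{weak} descent set $S$, i.e.\ $\{j: p_j\geq p_{j+1}\}=S$. With your strict descent set $D(\mathbf{p})=\{j: p_j>p_{j+1}\}$ the displayed identity fails already for $n=2$: the three maximal chains of $\mathcal{NC}_3$ carry the labels $(1,1)$, $(1,2)$, $(2,1)$ and one computes directly $\beta_{\scriptscriptstyle\mathcal{NC}_3}(\{1\})=2$, while only one of these three words has a strict descent. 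Symmetrically, the ``Gessel principle'' you invoke at the end holds \emph{without} the $\omega$ for the strict descent statistic: for an orbit of words whose letter multiplicities form the partition $\mu$ one has $\sum_{\mathbf{p}}\mathcal{Q}_{\{j:p_j>p_{j+1}\}}=h_\mu$, whereas $\sum_{\mathbf{p}}\mathcal{Q}_{\{j:p_j\geq p_{j+1}\}}=e_\mu$. Your two slips cancel, so you land on the correct conclusion, but both intermediate identities are false as written. The correct bookkeeping is $F_{\scriptscriptstyle\mathcal{NC}_{n+1}}=\sum_{\mathbf{p}\in park(n)}\mathcal{Q}_{\{j:p_j\geq p_{j+1}\}}$, which orbit by orbit sums to $e_\mu$, while $\mathcal{PF}_n=\sum_{\mathbf{p}\in park(n)}\mathcal{Q}_{\{j:p_j>p_{j+1}\}}$ sums orbit by orbit to $h_\mu$; the involution $\omega$ exchanges the two, which is precisely identity (\ref{id:flag}).
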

Third approach to $\mathcal{PF}_n$ is essentially
based on an involution $\psi$ on the ring of symmetric functions
defined by Macdonald \cite{Mac} (see also \cite{Haim} and
\cite{Len} on this subject). The map $\psi$ is defined by
$\psi(h_n)=h_n^*$, where $h_n^*$ are symmetric functions whose
generating function $H^*(z)=1+\sum_{n\geq 1}h_n^*z^n$ has the following
property:
$$zH^*(z)=[zH(z)]^{\scriptscriptstyle<-1>}.$$
Lagrange inversion gives
$$(-1)^nh_n^*=\sum_{\mu\vdash n}\frac{(n)_{\ell(\mu)-1}}{m(\mu)!}e_{\mu},$$
thus, by virtue of (\ref{id:PFn}) we have
\begin{equation}\label{id:star}
(-1)^n\omega(h_n^*)=\mathcal{PF}_n.
\end{equation}
Let us recall the notion of volume polynomial. If
$\mathbf{x}=\{x_1,\ldots,x_n\}$ is a set of commuting variables,
and $\mathbf{c}=(c_1,\ldots,c_l)$ is a sequence of positive
integers with $l\leq n$, then we set
\begin{eqnarray}
\mathbf{x}_{\mathbf{c}}&=&x_{c_1}\cdots
x_{c_l},\nonumber\\
\mathbf{x}^{\mathbf{c}}&=&x_1^{c_1}\cdots x_n^{c_n}\nonumber.
\end{eqnarray}
Following Pitman and Stanley \cite{PitStan}, we define the
$n$-\textit{volume polynomial} in the set of variables
$\mathbf{x}$ to be the polynomial
$V_n(\mathbf{x})=V_n(x_1,\ldots,x_n)$ such that
\begin{equation}\label{def:park-polin}
V_n(\mathbf{x})=\frac{1}{n!}\!\!\sum_{\scriptscriptstyle
\mathbf{p}\in park(n)}\mathbf{x}_{\mathbf{p}}.
\end{equation}
Straightforward computations provides the following expression of
$V_n(\mathbf{x})$:
\begin{equation}\label{id:park-pol}
V_n(\mathbf{x})=\sum_{\mu\vdash
n}\frac{(n)_{\ell(\mu)-1}}{m(\mu)!\mu!}\mathbf{x}^{\mu},
\end{equation}
where $\mathbf{x}^{\mu}=x_1^{\mu_1}\cdots x_l^{\mu_l}$ and
$\mu!=\mu_1!\cdots\mu_l!$ whenever $\mu=(\mu_1,\ldots,\mu_l)$.
\begin{rem}\label{rem}
We stress a direct connection between $\mathcal{PF}_n$ and this
last expression of $V_n(\mathbf{x})$: by making the
symbolic substitution
$\mathbf{x}^{\mu}\rightarrow\mathbf{x}_{\mu}$ in
(\ref{id:park-pol}), and then setting $x_i=i!h_i$, we recover the
parking function symmetric function $\mathcal{PF}_n$. This fact
suggests us the introduction of umbral notations.
\end{rem}
\section{Umbrae and Abel polynomials}
\textit{Classical umbral calculus} is a strongly symbolic method
for the manipulation of sequences $(1,a_1,a_2,\ldots)$, where
$a_i$ belongs to some ring $R$ whose quotient field is of
characteristic zero. It essentially consists of the following
data:
\begin{enumerate}
\item a set $A=\{\alpha,\gamma,\delta,\ldots\}$, called the
\textit{alphabet}, whose elements are named \textit{umbrae},
\item a linear functional $E$, called \textit{evaluation}, defined
on the polynomial ring $R[A]$ and taking value in $R$, such that
\begin{itemize}
\item $E[1]=1$,
\item
$E[\alpha^i\gamma^j\cdots\delta^k]=E[\alpha^i]E[\gamma^j]\cdots
E[\delta^k]$ for all pairwise distinct umbrae $\alpha,\gamma,\ldots,\delta$ (\textit{uncorrelation property}),
\end{itemize}
\item two special umbrae $\varepsilon$ (\textit{augmentation}) and
$u$ (\textit{unity}) such that
\begin{displaymath}
E[\varepsilon^i]=\delta_{0,i},\quad\textrm{for }i=0,1,2,\ldots,
$$and$$
E[u^i]=1,\quad\textrm{for }i=0,1,2,\ldots.
\end{displaymath}
\end{enumerate}
A sequence $(1,a_1,a_2,\ldots)$ is said to be \textit{represented}
by an umbra $\alpha$ if $E[\alpha^i]=a_i$ for $i=1,2,\ldots$ (note
that $E[\alpha^0]=1$ for all $\alpha$). In this case we say $a_i$
is the $i$-th \textit{moment} of $\alpha$. Two umbrae $\alpha$ and
$\gamma$ are said to be \textit{umbrally equivalent}, denoted by
$\alpha\simeq\gamma$, if $E[\alpha]=E[\gamma]$. They are said to
be \textit{similar}, written $\alpha\equiv\gamma$, if $\alpha$ and
$\gamma$ represent the same sequence of moments, that is
$\alpha^i\simeq\gamma^i$ for all $i=1,2,\ldots$. We can extend
coeffincientwise the action of $E$ to exponential formal power
series
\begin{displaymath}
e^{\alpha t}=\sum_{i\geq 0}\alpha^i\frac{t^i}{i!}
\end{displaymath}
obtaining in this way the \textit{generating function}
$f(\alpha,t)$ of $\alpha$:
\begin{displaymath}
f(\alpha,t)=E[e^{\alpha t}]=\sum_{i\geq
0}E[\alpha^i]\frac{t^i}{i!}=1+\sum_{i\geq 1}a_i\frac{t^i}{i!}.
\end{displaymath}
Note that $\alpha\equiv\gamma$ if and only if
$f(\alpha,t)=f(\gamma,t)$. The generating functions of the augmentation $\varepsilon$ and the
unity $u$ are respectively
\begin{displaymath}
f(\varepsilon,t)=E[e^{\varepsilon t}]=\sum_{i\geq
0}E[\varepsilon^i]\frac{t^i}{i!}=1
$$and$$
f(u,t)=E[e^{u t}]=\sum_{i\geq 0}E[u^i]\frac{t^i}{i!}=1+\sum_{i\geq
1}\frac{t^i}{i!}=e^t.
\end{displaymath}
If $\alpha$ and $\gamma$ are two
umbrae, then the generating function $f(\alpha+\gamma,t)$ is given
by $f(\alpha,t)f(\gamma,t)$. In fact
\begin{displaymath}
f(\alpha+\gamma,z)=E[e^{(\alpha+\gamma)t}]=E[e^{\alpha t}e^{\gamma
t}]=E[e^{\alpha t}]E[e^{\gamma t}]=f(\alpha,z)f(\gamma,z).
\end{displaymath}
The \textit{Bell umbra} $\beta$ is defined to be an umbra
representing the sequence of Bell numbers $\mathcal{B}_i$, that is
$\beta^i\simeq\mathcal{B}_i$. In this way
\begin{displaymath}
f(\beta,t)=e^{e^t-1}.
\end{displaymath}
The singleton umbra $\chi$ has moments $\chi^i\simeq 1$ if
$i=0,1$, and $\chi^i\simeq 0$ otherwise. Its generating function
is
\begin{displaymath}
f(\chi,t)=1+t.
\end{displaymath}
We work with a \textit{saturated umbral calculus}, see
\cite{RotTay}, if we extend the action of the evaluation $E$ to
the ring $R[A\cup B]$, where $B$ is the \textit{auxiliary
alphabet} whose elements, named \textit{auxiliary umbrae}, are
defined starting from the umbrae in $A$. Umbral equivalence and
similarity are extended via $E$ to polynomials $p$ and $q$ in
$R[A\cup B]$. Given $\alpha\in A$, first auxiliary umbra we
introduce is denoted by $-1\punt\alpha$. It is uniquely determined
(up to similarity) by the condition
\begin{equation}\label{def:-1.alpha}
\alpha+(-1\punt\alpha)\equiv\varepsilon.
\end{equation}
Its generating function is $f(\alpha,t)^{-1}$. Indeed, from
(\ref{def:-1.alpha}) we have
\begin{displaymath}
1=f(\varepsilon,t)= f[\alpha+(-1\punt\alpha),t]= f(\alpha,t)
f(-1\punt\alpha,t).
\end{displaymath}
More generally, if $n$ is an integer, the umbra denoted by
$n\punt\alpha$ is such that
$$n\punt\alpha\equiv\alpha_1+\cdots+\alpha_n,$$
$\alpha_1,\ldots,\alpha_n$ being uncorrelated umbrae similar to
$\alpha$. We have
$$f(n\punt\alpha,t)=f(\alpha,t)^n.$$
As introduced by Rota, Shen and Taylor \cite{RST}, an Abel
polynomials in the variable $x$ is a polynomial in $R[A\cup B][x]$
of type
$$A_{n}(x,\alpha)=x(x-n\punt\alpha)^{n-1}.$$
The following theorem states that
$n!V_n(\alpha_1,\ldots,\alpha_n)$ and
$A_{n}(\alpha,-1\punt\alpha)$ are in the same class of umbral
equivalence for all $n\geq 1$.
\begin{thm}
Let $\alpha$ be an umbra, $\alpha_1,\ldots,\alpha_n$ be $n$
uncorrelated umbrae similar to $\alpha$, and
$V_n(\alpha_1,\ldots,\alpha_n)$ be the $n$-volume polynomial
(\ref{def:park-polin}) in $x_i=\alpha_i$. Then we have
\begin{equation}\label{id:thm}
n!V_n(\alpha_1,\ldots,\alpha_n)\simeq
A_{n}(\alpha,-1\punt\alpha)\simeq\alpha(\alpha+n\punt\alpha)^{n-1}.
\end{equation}
\end{thm}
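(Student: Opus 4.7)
The plan is to establish the two equivalences separately; the second one is essentially formal, while the first (between $n!V_n$ and the Abel form $\alpha(\alpha+n\punt\alpha)^{n-1}$) carries the combinatorial content.

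For the equivalence $A_n(\alpha,-1\punt\alpha)\simeq\alpha(\alpha+n\punt\alpha)^{n-1}$, by the definition of the Abel polynomial we have $A_n(\alpha,-1\punt\alpha)=\alpha(\alpha-n\punt(-1\punt\alpha))^{n-1}$, so it suffices to verify $-n\punt(-1\punt\alpha)\equiv n\punt\alpha$. I would do this at the level of generating functions: applying $f(k\punt\gamma,t)=f(\gamma,t)^k$ and $f(-1\punt\gamma,t)=f(\gamma,t)^{-1}$ yields
\[f\bigl(n\punt(-1\punt\alpha),t\bigr)=f(-1\punt\alpha,t)^n=f(\alpha,t)^{-n}=f(n\punt\alpha,t)^{-1}=f(-n\punt\alpha,t),\]
whence $n\punt(-1\punt\alpha)\equiv -n\punt\alpha$, and substituting gives the claim.

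For the substantive equivalence $n!V_n(\alpha_1,\ldots,\alpha_n)\simeq \alpha(\alpha+n\punt\alpha)^{n-1}$, my approach is to express both sides as linear combinations of $a_\mu:=a_{\mu_1}\cdots a_{\mu_{\ell(\mu)}}$ with $\mu\vdash n$ and match coefficients. Setting $x_i=\alpha_i$ in (\ref{id:park-pol}) and using uncorrelation and similarity of $\alpha_1,\ldots,\alpha_n$ to $\alpha$, I get
\[n!V_n(\alpha_1,\ldots,\alpha_n)\simeq\sum_{\mu\vdash n}\frac{n!\,(n)_{\ell(\mu)-1}}{m(\mu)!\,\mu!}\,a_\mu.\]
For the right-hand side, I replace $n\punt\alpha$ by $\alpha_1+\cdots+\alpha_n$ (uncorrelated with $\alpha$) and expand $\alpha(\alpha+\alpha_1+\cdots+\alpha_n)^{n-1}$ by the multinomial theorem, producing a sum over compositions $(d_0,c_1,\ldots,c_n)$ of $n$ with $d_0\geq 1$ of terms $\binom{n-1}{d_0-1,c_1,\ldots,c_n}\,a_{d_0}a_{c_1}\cdots a_{c_n}$.

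The main obstacle, and the place where the computation lives, is grouping these composition terms by the partition $\mu\vdash n$ formed by their positive entries and checking that the resulting coefficient of $a_\mu$ agrees with the one above. For each distinct value $v$ appearing $m_v$ times in $\mu$, the number of compositions with $d_0=v$ and the remaining $\ell(\mu)-1$ positive parts placed among the $c_i$ is $\binom{n}{\ell(\mu)-1}(\ell(\mu)-1)!\,m_v/m(\mu)!$, and each such composition carries a multinomial coefficient equal to $(n-1)!\,v/\mu!$, since $(d_0-1)!\prod_i c_i!=\mu!/d_0$. Summing over $v$ uses $\sum_v m_v v=n$ and produces the coefficient $n!\,n!/\bigl((n-\ell(\mu)+1)!\,m(\mu)!\,\mu!\bigr)$ for $a_\mu$, which matches the target through the identity $(n)_{\ell(\mu)-1}=n!/(n-\ell(\mu)+1)!$. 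No conceptual difficulty arises; the work is entirely the combinatorial bookkeeping of how ordered compositions of $n$ with a distinguished first entry degenerate onto partitions.
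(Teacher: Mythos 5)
Your proof is correct and follows essentially the same route as the paper's: both expand $\alpha(\alpha+n\punt\alpha)^{n-1}$, regroup the resulting terms by the partition $\mu\vdash n$ formed from the exponents, and use $\sum_{v}v\,m_v=n$ to recover the coefficient $n!\,(n)_{\ell(\mu)-1}/(m(\mu)!\,\mu!)$ of $a_{\mu}$ appearing in (\ref{id:park-pol}). The only cosmetic difference is that you perform the full multinomial expansion over compositions with a distinguished first entry, whereas the paper first invokes the known moment formula $(n\punt\alpha)^{k}\simeq\sum_{\lambda\vdash k}\mathrm{d}_{\lambda}(n)_{\ell(\lambda)}a_{\lambda}$ and then applies a binomial expansion together with the map $\Delta(k,\lambda)=k\cup\lambda$.
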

\begin{proof}
We denote by $a_i$ the $i$-th moment of $\alpha$. Then, the $k$-th
moment of $n\punt\alpha$ is given by
\begin{displaymath}
(n\punt\alpha)^{k}\simeq\sum_{\lambda\vdash
k}\mathrm{d}_{\lambda}\,(n)_{\ell(\lambda)}\,a_{\lambda},
\end{displaymath}
where $\mathrm{d}_{\lambda}=k!/(\lambda!m(\lambda)!)$ and
$a_{\lambda}=a_{\lambda_1}\cdots a_{\lambda_l}$, whenever
$\lambda=(\lambda_1,\ldots,\lambda_l)$. Thus, we have
\begin{displaymath}
\alpha(\alpha+n\punt\alpha)^{n-1}\simeq\sum_{1\leq k\leq
n}\sum_{\lambda\vdash n-k}{n-1\choose
k-1}\mathrm{d}_{\lambda}\,(n)_{\ell(\lambda)}\,a_ka_{\lambda}.
\end{displaymath}
Let $Par(n)$ the set of all the integer partitions of $n$. If $S_n=\{(k,\lambda)\,|\,1\leq k\leq
n,\,\lambda\vdash n-k\}$, then the previous equivalence can be
rewritten as
\begin{displaymath}
\alpha(\alpha+n\punt\alpha)^{n-1}\simeq\sum_{(k,\lambda)\in
S_n}{n-1\choose
k-1}\mathrm{d}_{\lambda}\,(n)_{\ell(\lambda)}\,a_ka_{\lambda}.
\end{displaymath}
Let $\Delta:S_n\hookrightarrow Par(n)$ be defined by
$\Delta(k,\lambda)=k\cup\lambda$, where $k\cup\lambda$ is the
integer partition obtained adding the part $k$ to $\lambda$. If
$\Delta(k,\lambda)=\mu$ and $m(\mu)=(m_1,m_2,\ldots,m_n)$, then
$a_ka_{\lambda}=a_{\mu}$, $\ell(\lambda)=\ell(\mu)-1$,
$k!\lambda!=\mu!$ and $m(\lambda)!=m(\mu)!/m_k$, so that
\begin{displaymath}
{n-1\choose
k-1}\mathrm{d}_{\lambda}\,(n)_{\ell(\lambda)}a_ka_{\lambda}
=\frac{n!}{\mu!}(n)_{\ell(\mu)-1}\frac{k}{n}\frac{m_k}{m(\mu)!}a_{\mu}.
\end{displaymath}
Let $\{\mu\}$ denote the set of all distinct parts of $\mu$, that
is the set whose elements are the parts of $\mu$ each occurring
once. Since $\sum_{k\in\{\mu\}}km_k=n$, finally we gain
\begin{displaymath}
\alpha(\alpha+n\punt\alpha)^{n-1}\simeq n!\sum_{\mu\vdash
n}\frac{(n)_{\ell(\mu)-1}}{m(\mu)!\mu!}a_{\mu}.
\end{displaymath}
Equivalence (\ref{id:thm}) follows from (\ref{id:park-pol}) and
from the fact that if $x_i=\alpha_i$ then
$\mathbf{x}^{\mu}=\alpha_1^{\mu_1}\cdots\alpha_l^{\mu_l}\simeq
a_{\mu}$.
\end{proof}
\begin{rem}
Theorem \ref{id:thm} parallels a known result involving
$V_n(\mathbf{x})$ proved by Pitman and Stanley in \cite{PitStan}.
More precisely, for all $a\in\mathbb{C}$ we have
$$V_n(a,\ldots,a)=a(a+na)^{n-1},$$
so that $V_n(a,\ldots,a)$ is obtained by evaluating $x=a$ in the
Abel polynomial $A_{n}(x,-a)=x(x+na)^{n-1}$.
\end{rem}
We assume $R=\mathbb{Q}[\mathbf{x}]$ in the umbral setting, that is $R$ is the ring of
polynomials with rational coefficient in the set of variables
$\mathbf{x}$. Evaluation $E$ maps the umbrae of the base alphabet
$A$ in polynomials of $\mathbb{Q}[\mathbf{x}]$. For this reason we
call them \textit{polynomial umbrae}. Let $\bar{\epsilon}$ be a
polynomial umbra such that
$$
\bar{\epsilon}\equiv\chi_1 x_1+\cdots+\chi_n x_n,
$$
where $\chi_1,\ldots,\chi_n$ are $n$ uncorrelated umbrae similar
to $\chi$. Its generating function is
$$f(\bar{\epsilon},t)=\prod_{i=1}^{n}f(\chi_i x_i,t)=\prod_{i=1}^{n}(1+x_it)=1+\sum_{n\geq 1}e_it^n=E(t),$$
where $e_i=e_i(\mathbf{x})$ is the $i$-th elementary symmetric
function in the variables $\mathbf{x}$. Thus, $i$-th moment of $\bar{\epsilon}$ is given by
$$
\bar{\epsilon}^i\equiv i!e_i.
$$
In \cite{DiNaSenGua} a
polynomial umbra $\epsilon$ representing elementary symmetric
functions (that is $\epsilon^i\simeq e_i$) was defined, from which
the choice of the symbol $\bar{\epsilon}$. We define a
new polynomial umbra $\bar{\vartheta}$ as follows:
\begin{equation}\label{def:
vartheta}
\bar{\vartheta}\equiv -1\punt-\bar{\epsilon}.
\end{equation}
Since for every umbra $\alpha$ we have
$$f(-\alpha,t)=f(\alpha,-t),$$
it is clear that $f(\bar{\vartheta},t)=f(\bar{\epsilon},
-t)^{-1}=E(-t)^{-1}$. Moreover, being
$H(t)E(-t)=1$ we have $f(\bar{\vartheta},t)=H(t)$, and
$$
\bar{\vartheta}^i\simeq i!h_i.
$$
\begin{thm}
If $\bar{\vartheta}_1,\ldots,\bar{\vartheta}_n$ are $n$
uncorrelated umbrae similar to $\bar{\vartheta}$ and
$V_n(\mathbf{x})$ is the $n$-volume polynomial
$(\ref{def:park-polin})$, then
\begin{equation}\label{id:park-pol vs PFn}
V_n(\bar{\vartheta}_1,\ldots,\bar{\vartheta}_n)\simeq\mathcal{PF}_n.
\end{equation}
\end{thm}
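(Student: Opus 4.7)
The plan is to start from the explicit expansion
\[
V_n(\mathbf{x})=\sum_{\mu\vdash n}\frac{(n)_{\ell(\mu)-1}}{m(\mu)!\,\mu!}\,\mathbf{x}^{\mu}
\]
recorded in (\ref{id:park-pol}) and simply substitute $x_i=\bar{\vartheta}_i$, then reduce the resulting umbral monomials using the uncorrelation of $\bar{\vartheta}_1,\ldots,\bar{\vartheta}_n$ together with the moment identity $\bar{\vartheta}^{i}\simeq i!\,h_i$ established just before the theorem.

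Concretely, for a partition $\mu=(\mu_1,\ldots,\mu_l)\vdash n$ the definition of $\mathbf{x}^{\mu}$ gives
\[
\bar{\vartheta}_{1}^{\mu_1}\cdots\bar{\vartheta}_{l}^{\mu_l}.
\]
Because the umbrae $\bar{\vartheta}_i$ are pairwise uncorrelated, the evaluation $E$ factors across the product; because each $\bar{\vartheta}_i$ is similar to $\bar{\vartheta}$, we have $\bar{\vartheta}_i^{\mu_i}\simeq \mu_i!\,h_{\mu_i}$. Hence
\[
\bar{\vartheta}_{1}^{\mu_1}\cdots\bar{\vartheta}_{l}^{\mu_l}\simeq (\mu_1!\cdots\mu_l!)\,h_{\mu_1}\cdots h_{\mu_l}=\mu!\,h_{\mu}.
\]
Feeding this into the expansion of $V_n$ and canceling $\mu!$ against the $\mu!$ in the denominator yields
\[
V_n(\bar{\vartheta}_1,\ldots,\bar{\vartheta}_n)\simeq\sum_{\mu\vdash n}\frac{(n)_{\ell(\mu)-1}}{m(\mu)!}\,h_{\mu},
\]
which is precisely the right-hand side of (\ref{id:PFn}), namely $\mathcal{PF}_n$.

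This is really a one-line argument once (\ref{id:park-pol}) and the identity $\bar{\vartheta}^i\simeq i!h_i$ are in hand; it is the rigorous version of the heuristic described in Remark \ref{rem}, where the symbolic substitution $\mathbf{x}^{\mu}\to \mathbf{x}_{\mu}$ followed by $x_i\mapsto i!h_i$ was informally anticipated. There is no serious obstacle: the only thing to be careful about is to invoke the uncorrelation property correctly, so that the evaluation of a product over distinct indices $1,\ldots,l\le n$ splits as a product of evaluations, and to note that the identification $\mathbf{x}^{\mu}\simeq \mu! h_{\mu}$ depends on the $\bar{\vartheta}_i$ being distinct (uncorrelated) umbrae rather than powers of a single umbra, otherwise no such factorization would be legitimate.
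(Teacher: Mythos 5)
Your argument is correct and is essentially the paper's own proof: the paper likewise observes that $\bar{\vartheta}_1^{\mu_1}\cdots\bar{\vartheta}_l^{\mu_l}\simeq\mu!\,h_{\mu}$ and then compares the expansion (\ref{id:park-pol}) with (\ref{id:PFn}). You have merely spelled out the uncorrelation/factorization step that the paper leaves implicit.
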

\begin{proof}
Since
$\bar{\vartheta}_1^{\mu_1}\cdots\bar{\vartheta}_l^{\mu_l}\simeq\mu!h_{\mu}$
if $\mu_i$ are the parts of $\mu$, by virtue of identities (\ref{id:PFn}) and (\ref{id:park-pol}) we have proved the theorem.
\end{proof}
The relation between $V_n(\mathbf{x})$ and the symmetric functions
$F_{\scriptscriptstyle\mathcal{NC}_{n+1}}$ and $h_n^*$ introduced
in the previous section is stated in the following theorem.
\begin{thm}
If $\bar{\epsilon}_1,\ldots,\bar{\epsilon}_n$ are $n$ uncorrelated
umbrae similar to $\bar{\epsilon}$  and $V_n(\mathbf{x})$ is the
$n$-volume polynomial $(\ref{def:park-polin})$, then
\begin{eqnarray}
\nonumber
V_n(\bar{\epsilon}_1,\ldots,\bar{\epsilon}_n)&\simeq&F_{\scriptscriptstyle\mathcal{NC}_{n+1}},\\
\nonumber V_n(-\bar{\epsilon}_1,\ldots,-\bar{\epsilon}_n)&\simeq&h_n^*.
\end{eqnarray}
\end{thm}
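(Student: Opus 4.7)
The plan is to reduce both umbral equivalences to the explicit partition expansion (\ref{id:park-pol}) of the volume polynomial, then compare with the formulas the previous section gave for $F_{\scriptscriptstyle\mathcal{NC}_{n+1}}$ and $h_n^{*}$. The key input is the single moment calculation $\bar{\epsilon}^{i}\simeq i!\,e_{i}$, recorded just before the theorem.

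First I would evaluate $V_n$ at the similar uncorrelated umbrae $\bar{\epsilon}_1,\ldots,\bar{\epsilon}_n$. By the uncorrelation property,
$$\bar{\epsilon}_1^{\mu_1}\cdots\bar{\epsilon}_l^{\mu_l}\simeq \mu_1!\cdots\mu_l!\,e_{\mu_1}\cdots e_{\mu_l}=\mu!\,e_{\mu}$$
whenever $\mu=(\mu_1,\ldots,\mu_l)\vdash n$, so that (\ref{id:park-pol}) collapses to
$$V_n(\bar{\epsilon}_1,\ldots,\bar{\epsilon}_n)\simeq\sum_{\mu\vdash n}\frac{(n)_{\ell(\mu)-1}}{m(\mu)!}\,e_{\mu}.$$
The $\mu!$ in the denominator of (\ref{id:park-pol}) is absorbed, as it was in the proof of (\ref{id:park-pol vs PFn}).

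For the first equivalence I would compare this right-hand side with $F_{\scriptscriptstyle\mathcal{NC}_{n+1}}$. Applying $\omega$ to the expansion (\ref{id:PFn}) of $\mathcal{PF}_n$ and using $\omega(h_\mu)=e_\mu$, one obtains $\omega\mathcal{PF}_n=\sum_{\mu\vdash n}\frac{(n)_{\ell(\mu)-1}}{m(\mu)!}\,e_{\mu}$. Since $\omega$ is an involution, identity (\ref{id:flag}) yields $F_{\scriptscriptstyle\mathcal{NC}_{n+1}}=\omega\mathcal{PF}_n$, which coincides with the sum above; hence $V_n(\bar{\epsilon}_1,\ldots,\bar{\epsilon}_n)\simeq F_{\scriptscriptstyle\mathcal{NC}_{n+1}}$.

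For the second equivalence I would repeat the first computation with the umbrae $-\bar{\epsilon}_i$ in place of $\bar{\epsilon}_i$, paying attention only to the signs. Since $(-\bar{\epsilon})^{i}\simeq(-1)^{i}i!\,e_{i}$, the monomial $(-\bar{\epsilon}_1)^{\mu_1}\cdots(-\bar{\epsilon}_l)^{\mu_l}$ acquires a global sign $(-1)^{\mu_1+\cdots+\mu_l}=(-1)^{n}$, giving
$$V_n(-\bar{\epsilon}_1,\ldots,-\bar{\epsilon}_n)\simeq(-1)^{n}\sum_{\mu\vdash n}\frac{(n)_{\ell(\mu)-1}}{m(\mu)!}\,e_{\mu}.$$
Combining this with the Lagrange-inversion identity $(-1)^{n}h_n^{*}=\sum_{\mu\vdash n}\frac{(n)_{\ell(\mu)-1}}{m(\mu)!}\,e_{\mu}$ recalled before (\ref{id:star}) gives $V_n(-\bar{\epsilon}_1,\ldots,-\bar{\epsilon}_n)\simeq h_n^{*}$.

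There is no substantive obstacle: once $\bar{\epsilon}$ is identified as the generating umbra of the elementary symmetric functions, both statements are routine translations between the partition expansions of $V_n$, $\mathcal{PF}_n$ and $h_n^{*}$. The only thing to watch is that the global sign $(-1)^{n}$ in the second part exactly matches the $(-1)^{n}$ appearing on the left of the $h_n^{*}$ identity, so no $\omega$ is required there.
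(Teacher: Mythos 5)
Your proof is correct and follows essentially the same route as the paper's: compute the moments $\bar{\epsilon}_1^{\mu_1}\cdots\bar{\epsilon}_l^{\mu_l}\simeq\mu!\,e_{\mu}$ and $(-\bar{\epsilon}_1)^{\mu_1}\cdots(-\bar{\epsilon}_l)^{\mu_l}\simeq(-1)^n\mu!\,e_{\mu}$, substitute into the partition expansion (\ref{id:park-pol}), and compare with (\ref{id:flag}) and the Lagrange-inversion expansion of $h_n^*$ underlying (\ref{id:star}). The paper states this in one line; you have merely filled in the same details, including the correct handling of the sign $(-1)^n$ and of the involution $\omega$.
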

\begin{proof}
Observe that
$\bar{\epsilon}_1^{\mu_1}\cdots\bar{\epsilon}_l^{\mu_l}\simeq\mu!e_{\mu}$
and
$(-\bar{\epsilon}_1)^{\mu_1}\cdots(-\bar{\epsilon}_l)^{\mu_l}\simeq(-1)^n\mu!
e_{\mu}$. The theorem is proved by comparing (\ref{id:park-pol})
with (\ref{id:flag}) and (\ref{id:star}).
\end{proof}
In order to show the connection between $V_n(\mathbf{x})$ and
$\mathcal{PF}_n^{(k)}$, the following auxiliary umbrae may be
useful. For each umbra $\alpha$, let
$\alpha^{\scriptscriptstyle<-1>}$ denote an auxiliary umbra such
that
\begin{displaymath}
f(\alpha^{\scriptscriptstyle<-1>},t)-1=[f(\alpha,t)-1]^{\scriptscriptstyle<-1>}.
\end{displaymath}
The umbra $\alpha^{\scriptscriptstyle<-1>}$ is named the
\textit{compositional inverse} of $\alpha$ (see \cite{DiNaSen1}).
The \textit{$\alpha$-derivative} umbra, deeply studied in
{\cite{DiNaSen3}}, is an auxiliary umbra
$\alpha_{\scriptscriptstyle D}$ whose moments satisfies the
identity
\begin{displaymath}
(\alpha_{\scriptscriptstyle D})^i\simeq
\partial_{\alpha}\alpha^i\simeq i \alpha^{i-1},\quad i=1,2,\ldots.
\end{displaymath}
We obtain
\begin{displaymath}
f(\alpha_{\scriptscriptstyle D},t)\simeq\sum_{i\geq
0}(\alpha_{\scriptscriptstyle D})^i\frac{t^i}{i!}\simeq\sum_{i\geq
0}\alpha^{i-1}\frac{t^i}{(i-1)!}\simeq 1+t\,f(\alpha,t).
\end{displaymath}
Let $\bar{\rho}$ be a polynomial umbra with moments $\bar{\rho}^i\simeq
i!\mathcal{PF}_i$, then
\begin{displaymath}
f(\bar{\rho},t)=\mathcal{PF}(t).
\end{displaymath}
Identity (\ref{St1}) provides
\begin{equation}\label{id:PF vs H 3 umbral}
\bar{\rho}_{\scriptscriptstyle
D}\equiv{(-1\punt\bar{\vartheta})_{\scriptscriptstyle
D}}^{\scriptscriptstyle<-1>}.
\end{equation}
By means of (\ref{id:park-pol vs PFn}) and (\ref{id:PF vs H 3
umbral}) we have
$$n!V_n(\bar{\vartheta}_1,\ldots,\bar{\vartheta}_n)\simeq\frac{1}{n+1}[{(-1\punt\bar{\vartheta})_{\scriptscriptstyle
D}}^{\scriptscriptstyle<-1>}]^{n+1}.$$
It is not too difficult to show that such an equivalence will be true even if
we replace the umbra $\bar{\vartheta}$ with another umbra
$\alpha$. That is
\begin{equation}\label{id:1}
n!V_n(\alpha_1,\ldots,\alpha_n)\simeq\frac{1}{n+1}[{(-1\punt\alpha)_{\scriptscriptstyle
D}}^{\scriptscriptstyle<-1>}]^{n+1},
\end{equation}
for all $\alpha\in A$.
\begin{thm}
If $\bar{\vartheta}_1,\ldots,\bar{\vartheta}_n$ are $n$
uncorrelated umbrae similar to $\bar{\vartheta}$, $k$ is a
positive integer and $V_n(\mathbf{x})$ is the $n$-volume
polynomial $(\ref{def:park-polin})$, then
$$V_n(k\punt\bar{\vartheta}_1,\ldots,k\punt\bar{\vartheta}_n)\simeq \mathcal{PF}_n^{(k)}.$$
\end{thm}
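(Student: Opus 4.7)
The plan is to apply the general umbral identity (\ref{id:1}), which is stated to hold for every umbra $\alpha$, in the specific instance $\alpha=k\punt\bar{\vartheta}$. Doing so reduces the theorem to the umbral equivalence
$$
n!\,V_n(k\punt\bar{\vartheta}_1,\ldots,k\punt\bar{\vartheta}_n)\simeq\frac{1}{n+1}\bigl[(-1\punt(k\punt\bar{\vartheta}))_{\scriptscriptstyle D}^{\scriptscriptstyle<-1>}\bigr]^{n+1}\simeq n!\,\mathcal{PF}_n^{(k)},
$$
after which one divides by $n!$.

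The heart of the argument is a short chain of generating function manipulations. Starting from $f(\bar{\vartheta},t)=H(t)$, and using the three rules recalled in Section 3, namely $f(k\punt\alpha,t)=f(\alpha,t)^k$, $f(-1\punt\alpha,t)=f(\alpha,t)^{-1}$, and $f(\alpha_{\scriptscriptstyle D},t)=1+t\,f(\alpha,t)$, I would compute step by step
$$
f\bigl((-1\punt(k\punt\bar{\vartheta}))_{\scriptscriptstyle D},t\bigr)=1+\frac{t}{H(t)^k}.
$$
Passing to the umbral compositional inverse and invoking Stanley's formula (\ref{St2}), which asserts $t\mathcal{PF}^{(k)}(t)=[t/H(t)^k]^{\scriptscriptstyle<-1>}$, yields
$$
f\bigl((-1\punt(k\punt\bar{\vartheta}))_{\scriptscriptstyle D}^{\scriptscriptstyle<-1>},t\bigr)=1+t\,\mathcal{PF}^{(k)}(t).
$$

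In parallel with the umbra $\bar{\rho}$ introduced just before (\ref{id:PF vs H 3 umbral}), I would then define a polynomial umbra $\bar{\rho}^{(k)}$ by $(\bar{\rho}^{(k)})^i\simeq i!\,\mathcal{PF}_i^{(k)}$, so that $f(\bar{\rho}^{(k)},t)=\mathcal{PF}^{(k)}(t)$ and therefore $f(\bar{\rho}^{(k)}_{\scriptscriptstyle D},t)=1+t\,\mathcal{PF}^{(k)}(t)$. Matching generating functions gives the similarity $(-1\punt(k\punt\bar{\vartheta}))_{\scriptscriptstyle D}^{\scriptscriptstyle<-1>}\equiv\bar{\rho}^{(k)}_{\scriptscriptstyle D}$. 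Finally, the derivative-umbra identity $(\alpha_{\scriptscriptstyle D})^i\simeq i\,\alpha^{i-1}$ applied to $\bar{\rho}^{(k)}_{\scriptscriptstyle D}$ at $i=n+1$ gives
$$
\bigl(\bar{\rho}^{(k)}_{\scriptscriptstyle D}\bigr)^{n+1}\simeq(n+1)\,(\bar{\rho}^{(k)})^n\simeq(n+1)!\,\mathcal{PF}_n^{(k)},
$$
from which the theorem follows by dividing through by $(n+1)n!$.

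The only real obstacle is the starting point (\ref{id:1}): the paper derives the case $\alpha=\bar{\vartheta}$ explicitly and merely remarks that the argument extends. So as a preliminary step I would verify that the chain of identifications leading to (\ref{id:PF vs H 3 umbral}) survives verbatim when $\bar{\vartheta}$ is replaced by an arbitrary umbra $\alpha$; once this is granted, the rest is a mechanical translation between umbral operations and their generating functions combined with one direct appeal to (\ref{St2}).
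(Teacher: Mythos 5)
Your proposal is correct and follows essentially the same route as the paper: both invoke the general identity (\ref{id:1}) with $\alpha=k\punt\bar{\vartheta}$ (using $-1\punt k\punt\bar{\vartheta}\equiv -k\punt\bar{\vartheta}$) and then identify $\frac{1}{n+1}[(-k\punt\bar{\vartheta})_{\scriptscriptstyle D}^{\scriptscriptstyle<-1>}]^{n+1}$ with $n!\,\mathcal{PF}_n^{(k)}$ via Stanley's formula (\ref{St2}). You merely make explicit the generating-function chain and the umbra $\bar{\rho}^{(k)}$ that the paper compresses into a single appeal to (\ref{St2}), and you rightly flag that (\ref{id:1}) for general $\alpha$ is the one step the paper itself leaves unverified.
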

\begin{proof}
Let $\bar{\rho}^{(k)}$ denote an umbra such that
$(\bar{\rho}^{(k)})^n\simeq n!\mathcal{PF}_n^{(k)}$. From (\ref{St2}) we have
$$n!\mathcal{PF}_n^{(k)}\simeq(\bar{\rho}^{(k)})^n\simeq\frac{1}{n+1}\{[(-k\punt\bar{\vartheta})_{\scriptscriptstyle
D}]^{\scriptscriptstyle<-1>}\}^{n+1}.$$
Finally, since $-1\punt
k\punt\bar{\vartheta}\equiv-k\punt\bar{\vartheta}$, from
(\ref{id:1}) we gain

$$n!V_n(k\punt\bar{\vartheta}_1,\ldots,k\punt\bar{\vartheta}_n)\simeq\frac{1}{n+1}[{(-k\punt\bar{\vartheta})_{\scriptscriptstyle
D}}^{\scriptscriptstyle<-1>}]^{n+1}\simeq n!\mathcal{PF}_n^{(k)},$$
from which the theorem is proved.
\end{proof}
\section{Parking functions of type B}
As shown by Biane \cite{Bia1} the lattice of noncrossing
partitions can be embedded into the Cayley graph of the symmetric
group. Reiner
\cite{Rei} has introduced a class of noncrossing partitions for
all classical reflection groups, that for type A reflection groups (i.e. symmetric groups) corresponds to $\mathcal{NC}_n$. In the case
of hyperoctahedral groups, that is type B reflection groups, the \textit{noncrossing partitions of type B}
are defined to be the noncrossing partitions of the set $[\pm
n]=\{-n,\ldots,-1,1,\ldots,n\}$ which are invariant under sign
change. Let denote by $\mathcal{NC}_n^{\scriptscriptstyle B}$ the
set of such partitions. This methods have been generalized by
Brady \cite{Br}, Brady and Watt \cite{BW} and Bessis \cite{Bes}. A
poset $\mathcal{NC}(W)$ of noncrossing partitions can be defined
for finite Coxeter systems $(W,S)$ of each type. In particular,
these posets have a nice description in terms of the length
function $\ell_{\scriptscriptstyle T}$ defined with respect to their
respective sets $T$ of reflections. The reader may refer \cite{Arm} for an overview on the combinatorial
aspect of this beautiful subject. Stanley \cite{Stan1} has
noticed that maximal chains of $\mathcal{NC}_n^{\scriptscriptstyle
B}$ are labeled by all sequences in $[n]^n$ each occurring once. From this analogy with parking functions,
he has named them \textit{parking functions of type B}. Biane
\cite{Bia} has completed the picture by showing that parking
functions of length $n$  of type A (i.e. classical ones) and B correspond to
factorizations of the cycles $(1\ldots n+1)$ and
$(-n\ldots-1\,1\ldots n)$ respectively into products of
reflections. The parking function module of type $B$ can be defined by considering the standard action of $\frak{S}_n$ on $[n]^n$. Following Stanley, we denote by
$\mathcal{PF}_n^{\scriptscriptstyle B}$ its Frobenius
characteristic.
\begin{thm}[\cite{Stan1}]
Let $\mathcal{PF}^{\scriptscriptstyle B}(t)=1+\sum_{n\geq 1}\mathcal{PF}_n^{\scriptscriptstyle B}t^n$, then we have
\begin{equation}\label{id:PFB}
\mathcal{PF}_n^{\scriptscriptstyle B}=[t^n]H(t)^n,
\end{equation}
where $[t^n]$ means taking the coefficient of $t^n$ in the power series.
\end{thm}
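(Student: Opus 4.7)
The plan is to realize $\mathcal{PF}_n^{\scriptscriptstyle B}$ as the Frobenius characteristic of the permutation representation of $\mathfrak{S}_n$ on $[n]^n$ via a direct orbit-stabilizer analysis, and then to match the resulting expansion against the coefficient of $t^n$ in $H(t)^n$ by an elementary multinomial count.

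First I would parametrize the orbits. Two sequences $\mathbf{p},\mathbf{q}\in [n]^n$ lie in the same orbit under the standard action if and only if their multiplicity functions $f_{\mathbf{p}}(i)=|\{j:p_j=i\}|$ agree, so orbits correspond bijectively to weak compositions $(f(1),\ldots,f(n))$ of $n$ into $n$ nonnegative parts. The stabilizer of a representative of the orbit indexed by $f$ is the Young subgroup $\mathfrak{S}_{f(1)}\times\cdots\times\mathfrak{S}_{f(n)}$, whose conjugacy type is $\mathfrak{S}_{\lambda(f)}$, where $\lambda(f)\vdash n$ is the partition obtained by arranging the nonzero values of $f$ in weakly decreasing order.

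Second, I would apply the standard fact that, for any finite $\mathfrak{S}_n$-set, the Frobenius characteristic of the associated permutation representation is $\sum_{\mathcal{O}}h_{\lambda(\mathcal{O})}$, the sum ranging over orbits with stabilizers of type $\lambda(\mathcal{O})$ (since each orbit gives a representation induced from the trivial one on a Young subgroup, and $ch$ of such an induction is $h_\lambda$). The number of orbits with stabilizer type $\lambda\vdash n$ counts the choices of the $\ell(\lambda)$ nonzero positions among the $n$ coordinates and the arrangements of the parts of $\lambda$ among them, giving
\begin{displaymath}
\binom{n}{\ell(\lambda)}\frac{\ell(\lambda)!}{m(\lambda)!}=\frac{n!}{(n-\ell(\lambda))!\,m(\lambda)!}.
\end{displaymath}
Summing over $\lambda$ yields an explicit expression for $\mathcal{PF}_n^{\scriptscriptstyle B}$.

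Finally, I would compare this with the multinomial expansion
\begin{displaymath}
H(t)^n=\sum_{k_1,\ldots,k_n\geq 0}h_{k_1}\cdots h_{k_n}\,t^{k_1+\cdots+k_n},
\end{displaymath}
whose coefficient of $t^n$ is the sum of $h_{k_1}\cdots h_{k_n}$ over all weak compositions of $n$ with $n$ parts; collecting terms by the induced partition $\lambda$ produces exactly the same coefficients $n!/((n-\ell(\lambda))!\,m(\lambda)!)$, so (\ref{id:PFB}) follows. There is no serious obstacle; the only delicate point is bookkeeping the factor $m(\lambda)!$ arising from repeated parts of $\lambda$. Within the umbral framework of the previous section this matching step can be compressed still further, since $n!\,[t^n]H(t)^n$ coincides with the umbral expansion of $(n\punt\bar{\vartheta})^n$ obtained by invoking $\bar{\vartheta}_i^{k_i}\simeq k_i!h_{k_i}$ together with the uncorrelation property.
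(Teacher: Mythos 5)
Your proposal is correct, but there is nothing in the paper to compare it against: the paper states this theorem with no proof whatsoever, simply citing Stanley \cite{Stan1}. What you supply is therefore a genuine addition --- a self-contained derivation. All the steps check out: orbits of $\mathfrak{S}_n$ on $[n]^n$ are indexed by multiplicity functions, i.e.\ weak compositions of $n$ into $n$ nonnegative parts; stabilizers are Young subgroups of type $\lambda(f)$; $ch\bigl(\mathrm{Ind}_{\mathfrak{S}_\lambda}^{\mathfrak{S}_n}\mathbf{1}\bigr)=h_\lambda$ gives $\mathcal{PF}_n^{\scriptscriptstyle B}=\sum_{\lambda\vdash n}\frac{(n)_{\ell(\lambda)}}{m(\lambda)!}\,h_\lambda$, with the orbit count $\binom{n}{\ell(\lambda)}\ell(\lambda)!/m(\lambda)!=(n)_{\ell(\lambda)}/m(\lambda)!$ computed correctly; and collecting the terms of $[t^n]H(t)^n$ by the partition of nonzero exponents (using $h_0=1$) reproduces exactly the same coefficients. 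This expansion is also consistent with the rest of the paper: it is precisely what $V_n^{\scriptscriptstyle B}(\bar{\vartheta}_1,\ldots,\bar{\vartheta}_n)$ and $\frac{1}{n!}(n\punt\bar{\vartheta})^n$ evaluate to via $\bar{\vartheta}^i\simeq i!h_i$, so your closing umbral remark closes the loop with the two subsequent theorems of Section 4. No gaps.
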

We will define a type B Abel polynomial, denoted by
$B_{n}(x,\alpha)$, which plays a role analogous to
$A_{n}(x,\alpha)$ for the parking function module of type B. It is
simply obtained by dividing $A_{n+1}(x,\alpha)$ by $x$, that is
$$B_{n-1}(x,\alpha)=(x-n\punt\alpha)^{n-1}.$$
\begin{thm}
Let $\mathcal{PF}^{\scriptscriptstyle B}$ be the Frobenius
characteristic of the parking function module of type B and
$\bar{\vartheta}$ be the polynomial umbra defined in (\ref{def:
vartheta}). Then we have
$$n!\mathcal{PF}_n^{\scriptscriptstyle B}\simeq B_n(-1\punt\bar{\vartheta},-1\punt\bar{\vartheta})\simeq(n\punt\bar{\vartheta})^n.$$
\end{thm}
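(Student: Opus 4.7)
The plan is to deduce both similarities from two observations: the polynomial umbra $-1\punt\bar{\vartheta}$ has generating function $E(-t)=1/H(t)$, and Stanley's formula (\ref{id:PFB}) identifies $\mathcal{PF}_n^{\scriptscriptstyle B}$ with $[t^n]H(t)^n$. First I would unwind the definition of $B_n$: since $B_{n-1}(x,\alpha)=(x-n\punt\alpha)^{n-1}$ we have $B_n(x,\alpha)=(x-(n+1)\punt\alpha)^n$, and therefore
$$B_n(-1\punt\bar{\vartheta},-1\punt\bar{\vartheta})=\bigl(-1\punt\bar{\vartheta}-(n+1)\punt(-1\punt\bar{\vartheta})\bigr)^n.$$

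Following the convention already used in the type-A Theorem 2.3, I would read the umbra inside the parentheses as a sum of two uncorrelated pieces, so that their generating functions multiply. The first piece $-1\punt\bar{\vartheta}$ contributes $E(-t)=1/H(t)$, while the second piece $(-1)\punt\bigl((n+1)\punt(-1\punt\bar{\vartheta})\bigr)$ contributes $1/E(-t)^{n+1}=H(t)^{n+1}$. Their product is $H(t)^n$, which is precisely $f(n\punt\bar{\vartheta},t)$. Hence the umbra inside the parentheses is similar to $n\punt\bar{\vartheta}$, and since similarity preserves all moments it preserves $n$-th powers up to umbral equivalence; this gives the first similarity $B_n(-1\punt\bar{\vartheta},-1\punt\bar{\vartheta})\simeq(n\punt\bar{\vartheta})^n$. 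For the second similarity I would extract the $n$-th moment directly from the generating function: $E[(n\punt\bar{\vartheta})^n]=n![t^n]f(n\punt\bar{\vartheta},t)=n![t^n]H(t)^n=n!\mathcal{PF}_n^{\scriptscriptstyle B}$ by (\ref{id:PFB}), completing the chain.

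The main subtlety is the notational distinction between scalar negation $-\alpha$, whose generating function is $f(\alpha,-t)$, and dotted negation $-1\punt\alpha$, whose generating function is $1/f(\alpha,t)$; the type-B Abel polynomial is built from the latter, and this is exactly what makes the two generating functions telescope into $H(t)^n$ via the identity $H(t)E(-t)=1$. Once this convention is set and the uncorrelation of the two summands is invoked, the argument reduces to a single generating-function computation combined with Stanley's coefficient formula, with no combinatorial sum-manipulation analogous to the one that appears in the proof of the type-A Theorem 2.3.
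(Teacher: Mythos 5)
Your proposal is correct and follows essentially the same route as the paper: the paper's proof also reduces the first similarity to the telescoping identity $-1\punt\bar{\vartheta}+(n+1)\punt\bar{\vartheta}\equiv n\punt\bar{\vartheta}$ (which is exactly your generating-function computation $H(t)^{-1}H(t)^{n+1}=H(t)^{n}$) and obtains the second from Stanley's formula $\mathcal{PF}_n^{\scriptscriptstyle B}=[t^n]H(t)^n$ together with $f(n\punt\bar{\vartheta},t)=H(t)^n$. You simply spell out the steps that the paper leaves implicit.
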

\begin{proof}
If $\bar{\rho}^{\scriptscriptstyle B}$ is a polynomial umbra such that $(\bar{\rho}^{\scriptscriptstyle B})^i\simeq i!\mathcal{PF}^{\scriptscriptstyle B}$, then from (\ref{id:PFB}) we have
$$(\bar{\rho}^{\scriptscriptstyle B})^n\simeq(n\punt\bar{\vartheta})^n.$$
Finally, since
$-1\punt\bar{\vartheta}+(n+1)\punt\bar{\vartheta}\equiv
n\punt\bar{\vartheta}$, then we have
$B_n(-1\punt\bar{\vartheta},-1\punt\bar{\vartheta})\simeq(n\punt\bar{\vartheta})^n$
and the proof is completed.
\end{proof}
Of course, if we introduce the polynomial $V_n^{\scriptscriptstyle B}(\mathbf{x})$ such that
$$V_n^{\scriptscriptstyle B}(\mathbf{x})=\frac{1}{n!}\sum_{\mathbf{p}\in[n]^n}\mathbf{x}_{\mathbf{p}}=\sum_{\mu\vdash n}\frac{(n)_{\ell(\mu)}}{m(\mu)!\mu!}\mathbf{x}^{\mu},$$
then we can complete the analogy with the results in the previous section.
\begin{thm}
The following umbral equivalence holds:
$$n!V_n^{\scriptscriptstyle B}(\bar{\vartheta}_1,\ldots,\bar{\vartheta}_n)\simeq B_{n}(-1\punt\bar{\vartheta},-1\punt\bar{\vartheta}).$$
\end{thm}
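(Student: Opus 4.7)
The plan is to exploit the previous theorem to reduce the claim to the equivalence $n!V_n^{\scriptscriptstyle B}(\bar{\vartheta}_1,\ldots,\bar{\vartheta}_n)\simeq(n\punt\bar{\vartheta})^n$, since the preceding theorem already supplies $B_n(-1\punt\bar{\vartheta},-1\punt\bar{\vartheta})\simeq(n\punt\bar{\vartheta})^n$. So the real work is to show that both sides unfold to the same symmetric-function expression in the $h_\mu$'s, namely $n!\sum_{\mu\vdash n}(n)_{\ell(\mu)}h_\mu/m(\mu)!$.

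First I would evaluate the left-hand side directly. Substituting $x_i=\bar{\vartheta}_i$ in the explicit formula for $V_n^{\scriptscriptstyle B}(\mathbf{x})$ gives
$$n!V_n^{\scriptscriptstyle B}(\bar{\vartheta}_1,\ldots,\bar{\vartheta}_n)\simeq n!\sum_{\mu\vdash n}\frac{(n)_{\ell(\mu)}}{m(\mu)!\,\mu!}\,\bar{\vartheta}_1^{\mu_1}\cdots\bar{\vartheta}_{\ell(\mu)}^{\mu_{\ell(\mu)}},$$
and using $\bar{\vartheta}^i\simeq i!h_i$ together with the uncorrelation of the $\bar{\vartheta}_j$ yields $\bar{\vartheta}_1^{\mu_1}\cdots\bar{\vartheta}_{\ell}^{\mu_\ell}\simeq\mu!h_\mu$, so the right-hand side collapses to $n!\sum_{\mu\vdash n}(n)_{\ell(\mu)}h_\mu/m(\mu)!$, as desired.

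Next I would expand $(n\punt\bar{\vartheta})^n$ on the other side. Writing $n\punt\bar{\vartheta}\equiv\bar{\vartheta}_1+\cdots+\bar{\vartheta}_n$ and applying the multinomial theorem, I get
$$(n\punt\bar{\vartheta})^n\simeq\sum_{k_1+\cdots+k_n=n}\frac{n!}{k_1!\cdots k_n!}\,\bar{\vartheta}_1^{k_1}\cdots\bar{\vartheta}_n^{k_n}.$$
I would then group the tuples $(k_1,\ldots,k_n)$ by the partition $\mu\vdash n$ formed by their nonzero entries. For a fixed $\mu$ with $\ell=\ell(\mu)$ parts, the number of such tuples is $\binom{n}{\ell}\,\ell!/m(\mu)!=(n)_\ell/m(\mu)!$, each carrying multinomial coefficient $n!/\mu!$ and umbrally evaluating to $\mu!h_\mu$ by uncorrelation. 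Summing produces exactly $n!\sum_{\mu\vdash n}(n)_{\ell(\mu)}h_\mu/m(\mu)!$, matching the previous computation.

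Combining both evaluations with the preceding theorem closes the argument. The only real "obstacle" is the combinatorial bookkeeping that merges the $\ell$ nonzero indices of a composition into an unordered partition $\mu$; this mirrors closely the reorganization used in the proof of the Abel-polynomial theorem in the previous section and introduces no genuine difficulty. As a sanity check, both sides are $\simeq n!\mathcal{PF}_n^{\scriptscriptstyle B}$, in agreement with $\mathcal{PF}_n^{\scriptscriptstyle B}=[t^n]H(t)^n$ expanded by the same multinomial argument.
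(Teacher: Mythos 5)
Your argument is correct, and it is precisely the ``simple computations'' that the paper leaves to the reader: both sides reduce to $n!\sum_{\mu\vdash n}(n)_{\ell(\mu)}h_{\mu}/m(\mu)!$, the left via $\bar{\vartheta}_1^{\mu_1}\cdots\bar{\vartheta}_{\ell}^{\mu_{\ell}}\simeq\mu!h_{\mu}$ and the right via the moment expansion of $(n\punt\bar{\vartheta})^n$ (equivalently, the paper's own formula for $(n\punt\alpha)^k$ with $k=n$ and $a_i=i!h_i$), after which the preceding theorem's identification $B_n(-1\punt\bar{\vartheta},-1\punt\bar{\vartheta})\simeq(n\punt\bar{\vartheta})^n$ closes the loop. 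The combinatorial bookkeeping (counting $(n)_{\ell}/m(\mu)!$ tuples per partition $\mu$) is carried out correctly.
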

\begin{proof}
It follows by simple computations.
\end{proof}

\begin{thebibliography}{99}
%
\bibitem{Arm} \textsc{D. Armstrong}, \textit{Generalized noncrossing partitions and combinatorics of Coxeter groups}, arXiv:math/0611106v2 [math.CO] 12 Oct 2007.

\bibitem{Bes} \textsc{D. Bessis}, \textit{The dual braid monoid}, Ann. Sci. \'Ecole Norm. Sup. (4) \textbf{36} (2003), 647-683

\bibitem{Bia1} \textsc{P. Biane}, \textit{Some properties of crossings and partitions}, Discrete Math. \textbf{175} (1997), 41-53.

\bibitem{Bia} \textsc{P. Biane}, \textit{Parking functions of type A and
B}, Electron. J. Combin. \textbf{9} (2002), 7pp.

\bibitem{Br} \textsc{T. Brady}, \textit{A partial order on the symmetric group and new $K(\pi,1)$'s for the braid groups}, Adv. Math. \textbf{161} (2002), 20-40.

\bibitem{BW} \textsc{T. Brady, C. Watt}, \textit{$K(\pi,1)$'s for Artin groups of finite type}, in: Proceedings of the Conference on geometric and combinatorial group theory, Part I (Haifa 2000), Geom. Dedicata \textbf{94} (2002), 225-250.

\bibitem{DNPS} \textsc{E. Di Nardo, P. Petrullo, D. Senato}, \textit{Cumulants, convolutions and volume polynomial}, preprint

\bibitem{DiNaSen1} \textsc{E. Di Nardo, D. Senato}, \textit{Umbral nature of Poisson random
variable}, in: H. Crapo and D. Senato eds., Algebraic combinatorics
and computer science, Springer Verlag, Italia, (2001), 245-266

\bibitem{DiNaSen2} \textsc{E. Di Nardo, D. Senato}, \textit{An umbral setting for cumulants and factorial
moments}, European J. Combin. \textbf{27} (2006), 394-413

\bibitem{DiNaSen3} \textsc{E. Di Nardo, H. Niederhausen, D. Senato}, \textit{The
classical umbral calculus: Sheffer sequences}, arXiv:0810.3554v1
[math.CO] 20 Oct 2008

\bibitem{DiNaSenGua} \textsc{E. Di Nardo, G. Guarino, D. Senato}, \textit{An unifying framework for k-statistics, polykays and their multivariate
generalizations}, Bernoulli vol.\textbf{14} n.\textbf{3} (2008), 440-468

\bibitem{Edel1} \textsc{P.H. Edelman}, \textit{Chain enumeration and noncrossing
partitions}, Discrete Math. \textbf{31} (1980), 171-180

\bibitem{Edel2} \textsc{P.H. Edelman}, \textit{Multichains, noncrossing partitions and trees}, Discrete Math. \textbf{40}
(1982), 171-179

\bibitem{EdelSim} \textsc{P.H. Edelman, R. Simion}, \textit{Chains in the lattice of noncrossing
partitions}, Discrete Math. \textbf{126} (1994), 107-119

\bibitem{FoaRio} \textsc{D. Foata, J. Riordan}, \textit{Mappings of acyclic and parking
functions}, Aequationes math. \textbf{10} (1974), 10-12

\bibitem{Ges} \textsc{I.M. Gessel}, \textit{Applications of the classical umbral calculus}, Algebra
Universalis \textbf{49} (2003), 397–434.

\bibitem{Haim} \textsc{M. Haiman}, \textit{Conjectures on the quotient ring by diagonal
invariants}, J. Algebraic Combin. \textbf{3} (1994), 17-76

\bibitem{Haim2} \textsc{M. Haiman}, \textit{Combinatorics, symmetric functions, and Hilbert
schemes}, Current Developments in Mathematics 2002 \textbf{1}
(2002), 39-111

\bibitem{Krew} \textsc{G. Kreweras}, \textit{Sur les partitions non crois\'{e}e d'un
cycle}, Discrete Math. \textbf{1} (1972), 333-350

\bibitem{KonWei} \textsc{A.G. Konheim, B. Weiss}, \textit{An occupancy discipline and
applications}, SIAM J. Appl. Math. \textbf{14} (1966), 1266-1274

\bibitem{KunYan} \textsc{J.P.S. Kung, C. Yan}, \textit{Gon\u{c}arov polynomials and parking
functions}, J. Combin. Theory Ser. A \textbf{102} (2003), 16-37

\bibitem{Len} \textsc{C. Lenart}, \textit{Lagrange inversion and Schur
functions}, J. Algebraic Combin. \textbf{11} (2000), 69-78

\bibitem{Mac} \textsc{I.G. Macdonald}, \textit{Symmetric functions and Hall
polynomials}, second ed., Oxford University Press, Oxford, (1995)

\bibitem{NovThy1} \textsc{J.-C. Novelli, J.-Y. Thybon}, \textit{A Hopf algebra of parking
functions}, in: FPSAC Proceedings 2004/ Actes SFCA 2004 (2004),
215-226

\bibitem{NovThy2} \textsc{J.-C. Novelli, J.-Y. Thybon}, \textit{Parking functions and descent
algebras}, Ann. Comb. \textbf{11} (2007), 59-68

\bibitem{PitStan} \textsc{J. Pitman, R.P. Stanley},  \textit{A polytope related to empirical distributions, plane trees, parking functions and the
associahedron}, Discrete Comput. Geom \textbf{27} (2002), 603-634

\bibitem{Pou} \textsc{Y. Poupard}, \textit{\'{E}tude et denombrement paralleles des partitiones non crois\'{e}e d'un cycle et des decoupages d'un polygone
convexe}, Discrete Math. \textbf{2} (1972), 279-288

\bibitem{Rat} \textsc{A. Rattan}, \textit{Parking functions and related combinatorial structures},
Master's Thesis, University of Waterloo (2001), available at
http://www.maths.bris.ac.uk/~maxar/publications/publications.html

\bibitem{Rei} \textsc{V. Reiner}, \textit{Non-crossing partitions for classical reflection groups}, Discrete Math. \textbf{177} (1997), 195-222.

\bibitem{RST} \textsc{G.-C. Rota, J. Shen, B.D. Taylor}, \textit{All polynomials of binomial type are represented by Abel plynomials},
Ann. Scuola Norm. Sup. Pisa Cl. Sci. \textbf{25}, 731-738.

\bibitem{RotTay} \textsc{G.-C. Rota, B.D. Taylor}, \textit{The classical umbral
calculus}, SIAM J. Math. Anal. \textbf{25} (1994), 694-711

\bibitem{Sim1} \textsc{R. Simion}, \textit{Combinatorial statistics on non-crossing
partitions}, J. Combin. Theory Ser. A \textbf{66} (1994),
270-301

\bibitem{Sim2} \textsc{R. Simion}, \textit{Noncrossing partitions}, Discrete
Math. \textbf{217} (2000), 367-409

\bibitem{Spei} \textsc{R. Speicher}, \textit{Multiplicative functions on the lattice of non-crossing partitions and free
convolution}, Math. Ann. \textbf{298} (1994), 611-628

\bibitem{Stan1} \textsc{R.P. Stanley}, \textit{Parking functions and noncrossing
partitions}, Electron. J. Combin. \textbf{4} (1997) R20, 14pp.

\bibitem{Stan3} \textsc{R.P. Stanley}, \textit{Hyperplane arrangement, parking functions and tree
inversions}, in: B. Sagan, R. Stanley eds., Mathematical Essays in
Honor of Gian-Carlo Rota, Birkh\"{a}user, Boston, (1998), 259-375

\bibitem{Zei} \textsc{D. Zeilberger}, \textit{The Umbral Transfer-Matrix Method: I. Foundations},
J. Combin. Theory Ser. A \textbf{91} (2000), 451-463


\end{thebibliography}
\end{document}